\newtheorem{defi}{Definition}
\newcommand{\brdef}{\begin{defi}}
\newcommand{\erdef}{\end{defi}}
\newtheorem{cor}{Corollary}
\newcommand{\bcor}{\begin{cor}}
\newcommand{\ecor}{\end{cor}}
\newtheorem{thm}{Theorem}
\newcommand{\bth}{\begin{thm}}
\newcommand{\eth}{\end{thm}}
\newtheorem{lem}{Lemma}
\newcommand{\ble}{\begin{lem}}
\newcommand{\ele}{\end{lem}}
\def\pn{\par\noindent}
\numberwithin{equation}{section}
\begin{document}
\begin{center}
{\large \bf Certain Curvature Conditions on \\
$N(k)$-Paracontact Metric Manifolds} \\
\
\
\\
{Vishnuvardhana. S.V.\footnote{ corresponding author.}$^{a}$, Venkatesha$^{b}$, B. Phalaksha Murthy$^{b}$ and B. Shanmukha$^{b}$}\\
$^{a}$ Department of Mathematics, GITAM School of Technology,\\
GITAM(Deemed to be university), Bangalore, Karnataka, INDIA.\\
$^{b}$ Department of Mathematics, Kuvempu University,\\
Shankaraghatta - 577 451, Shimoga, Karnataka, INDIA.\\
\pn{\tt e-mail:{\verb+svvishnuvardhana@gmail.com+, \verb+vensmath@gmail.com+, \verb+pmurthymath@gmail.com+, \verb+meshanmukha@gmail.com+}}

\end{center}
\begin{quotation}
{{\bf Abstract}: The aim of the present paper is to study pseudo-symmetric, Ricci generalized pseudo-symmetric and generalized Ricci recurrent $N(k)$-Paracontact Metric Manifolds.}
\\
\textbf{Key Words:} $N(k)$-Paracontact metric manifolds, Ricci generalized pseudo-symmetric manifolds, Pseudo-symmertic manifolds.
\\
\textbf{AMS Subject Classification:} 53C15, 53C25.
\end{quotation}
\section{{\bf Introduction \label{sec1}}}

In modern geometry one of the most interesting research topics are on contact and paracontact geometry. If we look at the recent developments in these topics, there is an impression that geometers are more concentrated in the study of nullity distribution on contact and paracontact manifolds by emphasizing similarities and differences between them. The study of paracontact geometry was triggered by Kaneyuki and Kozai \cite{SKMK}. An efficient contribution to this geometry was given by Zamkovoy \cite{SZ}, Kaneyuki \cite{SKFLW}, Alekseevsky et al. \cite{DVACM}, etc. A significant subclass of paracontact metric manifold like para-Sasakian manifold was introduced by Zamkovoy \cite{SZ}. A normal paracontact metric manifold is called a para-Sasakian manifold and which implies a K-paracontact condition and the converse holds only in dimension 3. In any para-Sasakian manifold
\begin{equation}
\label{2.3a} \mathcal{R}(X, Y)\xi =-\{\eta(Y)X - \eta(X)Y\},
\end{equation}
holds. Some of differences between contact and paracontact cases are: Unlike in contact metric geometry the condition (\ref{2.3a}) does not imply that the paracontact manifold is paraSasakian.  Another important difference between them is due to the non-positive definiteness of the metric.

\par The nullity distribution on paracontact manifolds was introduced by Montano et al.\cite{BCMIKECM}. Molina and his co-author [\cite{VMM}, \cite{GCVMM}] obtained some examples and classification theorems on paracontact metric $(k,\mu)$-spaces. The main difference between contact metric $(\tilde{k},\tilde{\mu})$-spaces and paracontact metric $(k, \mu)$-spaces is that, the constant $\tilde{k}$ cannot be greater than 1 incase of contact metric $(\tilde{k},\tilde{\mu})$-spaces but, no restrictions for constants $k$ and $\mu$ incase of paracontact metric $(k,\mu)$-spaces.

After introducing torse forming vector fields by Yano \cite{KYano}, most of the geometers studied these on different manifolds with different curvature restrictions because of their applications in many branches of physics.

\par With this background, in this article we study some curvature properties of $N(k)$-paracontact metric manifolds. The paper is organized as follows: After preliminaries in Section \ref{sec2}, we proved that a torse forming vector field in a 3-dimensional $N(k)$-paracontact metric manifold $\mathcal{M}^{3}$ is a concircular vector field. In the next section we have shown that a non-flat $N(k)$-paracontact metric manifold is a proper pseudo-symmetric manifold then the manifold is a pseudo-symmetric manifold of constant type. Section \ref{sec5} deals with the study of Ricci generalized pseudo-symmetric $N(k)$-paracontact metric manifolds. We prove that in a generalized Ricci recurrent $N(k)$-paracontact metric manifold $\mathcal{M}$, the associated 1-forms are linearly dependent and the vector fields of the associated 1-forms are of opposite direction in section \ref{sec6}. In section \ref{sec7}, we constructed an example to verify our some of the results.

%
%

\section{{\bf Preliminaries \label{sec2}}}
A smooth manifold $\mathcal{M}^{2n+1}$ is said to have an almost paracontact structure if it admits a $(1, 1)$-tensor field $\phi$, a vector field $\xi$ and a 1-form $\eta$ satisfying following conditions \cite{SKFLW}:
\begin{eqnarray}
\label{2.1} \phi^{2}=I-\eta\otimes\xi,\,\,\,\,\,\,\,\,\,\,\eta(\xi)=1,\,\,\,\,\,\,\,\,\,\,\,\phi\xi=0,\,\,\,\,\,\,\,\,\,\,\,\eta\circ\phi=0.
\end{eqnarray}

\par A pseudo-Riemannian metric $g$ with almost paracontact structure $(\phi,\xi,\eta)$ such that,
\begin{eqnarray}
\label{2.2} g(\phi X,\phi Y)= -g(X,Y) + \eta(X)\eta(Y),\,\,\,\,\,\,\,\,\,g(X,\xi)=\eta(X), \,\,\,\,\,\,\,\,\, g(\phi X, Y)=-g(X, \phi Y),
\end{eqnarray}
then the structure $(\phi,\xi,\eta, g)$ on $\mathcal{M}^{2n+1}$ is said be almost paracontact metric structure. A manifold $\mathcal{M}^{2n+1}$ together with this almost paracontact metric structure is called an almost paracontact metric manifold and it is denoted by $\mathcal{M}^{2n+1}(\phi,\xi,\eta,g)$. For an almost paracontact metric manifold, there always exists a $\phi$-basis.

A paracontact metric $(k, \mu)$-manifold \cite{BCMLDT} is a paracontact metric manifold for which the curvature tensor field satisfies
\begin{eqnarray}
\nonumber \mathcal{R}(X,Y)\xi=k\{\eta(Y)X -\eta(X)Y\} + \mu\{\eta(Y)hX - \eta(X)hY\},
\end{eqnarray}
for all $X,Y\in \mathcal{TM}$, where $k$, $\mu\in R$. Here $2h$ is the Lie derivative of $\phi$ in the direction of $\xi$. Moreover, a symmetric, trace-free $(1, 1)$-tensor field $h$ satisfies.
\begin{eqnarray}
\label{2.3} h\xi=0,\,\,\,\,\,\,\,\,h\phi+\phi h=0,\,\,\,\,\,\,\,\,\,\nabla_{X}\xi=-\phi X + \phi hX.
\end{eqnarray}
Furthermore, $h=0$ is proportionate to, $\xi$ being killing and in such cases we call $\mathcal{M}^{2n+1}(\phi,\xi,\eta,g)$ as a K-paracontact manifold.

If $\mu= 0$, the $(k,\mu)$-paracontact metric manifold reduces to $N(k)$-paracontact metric manifold. Thus, for an $N(k)$-paracontact metric manifold we have
\begin{equation}
\label{2.3b} R(X,Y)\xi=k\{\eta(Y)X-\eta(X)Y\},
\end{equation}
$k$ being a constant.

In a $N(k)$-paracontact metric manifold, following relations hold \cite{DEBJSKMMT}:
\begin{eqnarray}
\label{2.4} & & h^{2}=(1+k)\phi^{2},\\
\label{2.5} & & (\nabla_{X}\eta)(Y) = g(X, \phi Y ) - g(hX, \phi Y),\\
\label{2.6} & & R(\xi, X)Y=k\{g(X, Y)\xi - \eta(Y)X\},\\
\label{2.10} & & S(X,\xi)=2nk\eta(X).
\end{eqnarray}

\section{\bf Torse forming vector field on 3-dimensional $N(k)$-paracontact metric manifolds \label{sec3}}
\begin{defi}
On a pseudo-Riemannian manifold, if covariant derivative of a vector field $\upsilon$ satisfies
\begin{equation}
(\nabla_{X}\omega)(Y)= \rho g(X, Y) + \beta(X)\omega(Y), \label{3.1}
\end{equation}
then $\upsilon$ is called as torse forming vector field. Here, $\upsilon$ is defined as $g(X, \upsilon) = \omega(X)$ for any vector field $X$, $\rho$ is a non-zero scalar and $\beta$ is a non-zero 1-form.
\end{defi}

Let us consider a $N(k)$-paracontact metric manifold $\mathcal{M}$ admitting a unit torse forming vector field $\widetilde{\upsilon}$ corresponding to the non-null torse forming vector field $\upsilon$. Hence if $T(X) = g(X, \widetilde{\upsilon})$, then we have
\begin{equation}
T(X)=\frac{\omega(X)}{\sqrt{\omega(\upsilon)}}. \label{3.2}
\end{equation}

From (\ref{3.1}) and (\ref{3.2}), we get
\begin{equation}
(\nabla_{X}T)(Y)=\lambda g(X, Y) + \beta(X)T(Y), \label{3.3}
\end{equation}
where $\lambda=\frac{\rho}{\sqrt{\omega(\upsilon)}}$

Since $\widetilde{\upsilon}$ is a unit vector field,
\begin{equation}
a)\,\,\,\beta(X)=-\lambda T(X)\,\,\,\,\,\,\,\,\,\,\,\,\,\,\,\,\,\,\,\,\,\,b)\,\,\,(\nabla_{X}T)(Y)=\lambda [g(X, Y) - T(X)T(Y)], \label{3.4}
\end{equation}
which implies that the 1-form $T$ is closed. Now differentiating (\ref{3.4})(b) covariantly and using
the Ricci identity, we obtain
\begin{eqnarray}
\nonumber T(\mathcal{R}(X, Y)Z) = (Y\lambda)[g(X, Z) - T(X)T(Z)] - (X\lambda)[g(Y, Z) - T(Y)T(Z)] \\
+ \lambda^{2}[g(X, Z)T(Y) - g(Y, Z)T(X) ]. \label{3.5}
\end{eqnarray}

Replace $Z$ by $\xi$ and then using (\ref{2.3b}) and $T(\xi)=\eta(\widetilde{\upsilon})$
\begin{eqnarray}
\nonumber (k+\lambda^{2})\{\eta(Y)T(X)-\eta(X)T(Y)\} + [X(\lambda)\eta(Y)-Y(\lambda)\eta(X)] \\
+ \eta(\widetilde{\upsilon})\{Y(\lambda)T(X)-X(\lambda)T(Y)\}=0. \label{3.6}
\end{eqnarray}

Put $X$ by $\widetilde{\upsilon}$ in (\ref{3.6}) and using $T(\widetilde{\upsilon})=1$ we get
\begin{eqnarray}
(k+\lambda^{2}+\widetilde{\upsilon}(\lambda))\{\eta(Y)-\eta(\widetilde{\upsilon})T(Y)\} =0, \label{3.7}
\end{eqnarray}
which gives either
\begin{eqnarray}
k+\lambda^{2}+\widetilde{\upsilon}(\lambda)=0 \label{3.8} \\
or\,\,\,\,\, \eta(Y)-\eta(\widetilde{\upsilon})T(Y)=0. \label{3.9}
\end{eqnarray}
Suppose equation (\ref{3.9}) (i.e., equation (\ref{3.8}) not holds) holds. Putting $X = \xi$ in (\ref{3.9}), we have $\eta(\widetilde{\upsilon}) = \pm 1$. This implies that
\begin{equation}
\eta(X) = \pm T(X). \label{3.10}
\end{equation}
From (\ref{2.3}), (\ref{3.4}) and (\ref{3.10}), we get $\lambda=\pm p(\text{a constant})$. Hence the vector field $\widetilde{\upsilon}$ is concircular.

Now, suppose equation (\ref{3.8}) (i.e., equation (\ref{3.9}) not holds) holds. Putting $X=\xi$ and then contraction of (\ref{3.5}) gives
\begin{equation}
\xi(\lambda)=\widetilde{\upsilon}(\lambda)\eta(\widetilde{\upsilon}). \label{3.11}
\end{equation}

Putting $Y=\xi$ in (\ref{3.6}), we get
\begin{equation}
X(\lambda)=-(k+\lambda^{2})T(X).   \label{3.12}
\end{equation}

Using (\ref{3.2})(a) and the above equation, one can get
\begin{eqnarray}
Y(\beta(X))=(k+\lambda^{2})T(X)T(Y)-\lambda Y(T(X)), \label{3.13}\\
X(\beta(Y))=(k+\lambda^{2})T(X)T(Y)-\lambda X(T(Y)).    \label{3.14}
\end{eqnarray}
From (\ref{3.13}) and (\ref{3.14}), we have
\begin{equation}
\beta[X, Y]=-\lambda T[X, Y], \,\,\,\,\,\,\,\,\,\,\,\, d\beta[X, Y]=-\lambda [(dT)(X, Y)]. \label{3.15}
\end{equation}
Since T is closed, $\beta$ is also closed which implies the vector field $\widetilde{\upsilon}$ is concircular. Thus we have
\begin{thm}\label{T1}
 A torse forming vector field in a 3-dimensional $N(k)$-paracontact metric manifold $\mathcal{M}^{3}$ is a concircular vector field.
\end{thm}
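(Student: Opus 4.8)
The plan is to exploit the unit-length normalization of the torse-forming field, to push its defining relation through the Ricci identity, and then to collapse the resulting curvature term with the $N(k)$-condition \eqref{2.3b}. Concretely, replacing $\upsilon$ by the associated unit field $\widetilde{\upsilon}$ with dual $1$-form $T$ puts the torse-forming equation into the normalized shape \eqref{3.4}(b), from which two facts are immediate: the companion $1$-form is $\beta=-\lambda T$, and $T$ is closed. Hence proving that $\widetilde{\upsilon}$ is concircular reduces to proving that $\beta$ is closed, and this will follow either because $\lambda$ turns out to be locally constant (so that $\beta=-\lambda T$ inherits closedness from $T$) or because $d\beta=0$ can be checked directly.

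Next I differentiate \eqref{3.4}(b) covariantly once more and apply the Ricci identity; this expresses $T(\mathcal{R}(X,Y)Z)$ in terms of $g$, $T$, $\lambda$ and $d\lambda$, which is \eqref{3.5}. Substituting $Z=\xi$ and invoking \eqref{2.3b} evaluates the curvature contribution explicitly and produces the pointwise identity \eqref{3.6} linking $\eta$, $T$, $\lambda$, $\widetilde{\upsilon}(\lambda)$, $X(\lambda)$ and $Y(\lambda)$. Putting $X=\widetilde{\upsilon}$ and using $T(\widetilde{\upsilon})=1$ factors this into \eqref{3.7}, which forces one of the two alternatives
\[
k+\lambda^{2}+\widetilde{\upsilon}(\lambda)=0 \qquad\text{or}\qquad \eta(Y)=\eta(\widetilde{\upsilon})\,T(Y)\ \text{ for all } Y .
\]

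It then remains to dispose of this dichotomy. In the second alternative, evaluating at $X=\xi$ gives $\eta(\widetilde{\upsilon})=\pm1$, hence $\eta=\pm T$; feeding this back into the structure equation \eqref{2.3} together with \eqref{3.4} constrains $\lambda$ to be constant, so $\beta=-\lambda T$ is closed and $\widetilde{\upsilon}$ is concircular. In the first alternative, putting $X=\xi$ in \eqref{3.5} and contracting gives \eqref{3.11}, while $Y=\xi$ in \eqref{3.6} gives the gradient relation $X(\lambda)=-(k+\lambda^{2})T(X)$ of \eqref{3.12}; differentiating the identity $\beta=-\lambda T$, inserting \eqref{3.12}, and using that $T$ is closed yields $\beta[X,Y]=-\lambda\,T[X,Y]$ and $d\beta=-\lambda\,dT=0$, so $\beta$ is again closed and $\widetilde{\upsilon}$ is concircular. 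Combining the two cases proves the theorem. The step I expect to be most delicate is the derivation of \eqref{3.5}: one has to apply the Ricci identity to the second covariant derivative of the right-hand side of \eqref{3.4}(b), which carries the non-constant factor $\lambda$ together with quadratic terms in $T$, and then carefully separate the genuine curvature term from the $d\lambda$-contributions; once \eqref{3.5}--\eqref{3.6} are in place, the remaining case analysis is short and essentially mechanical.
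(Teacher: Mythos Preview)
Your proposal is correct and follows essentially the same route as the paper's own argument: normalize to the unit field, derive \eqref{3.4} and the closedness of $T$, pass through the Ricci identity to \eqref{3.5}--\eqref{3.7}, and then handle the two alternatives \eqref{3.8}/\eqref{3.9} exactly as the paper does, including the use of \eqref{3.11}--\eqref{3.12} in the first case and the conclusion $\lambda=\text{const}$ in the second. The only cosmetic slip is that in the second alternative you should set $Y=\xi$ (not $X=\xi$) in \eqref{3.9}, but this matches the paper's own wording and does not affect the argument.
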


\section{\bf Pseudo-symmertic $N(k)$-paracontact metric manifolds \label{sec4}}
The curvature tensor $\mathcal{R}$ satisfies the condition
\begin{equation}
\mathcal{R}(X, Y)\cdot \CMcal{I}=L_{\CMcal{I}}[(X\wedge_{g} Y)\cdot \CMcal{I}], \label{I1}
\end{equation}
at every point of the Riemannian manifold then the manifold $M$ is called pseudo-symmetric (resp., Ricci-pseudo-symmetric) manifold when $\CMcal{I}=\mathcal{R} (resp., \mathcal{S})$. Here $(X\wedge_{g} Y)$ is an endomorphism and is defined by
\begin{equation}
\label{I2}   (X\wedge_{g} Y)Z=g(Y, Z)X-g(X, Z)Y,
\end{equation}
 and $L_{\CMcal{I}}$ is some function on $U_{\CMcal{I}} = \{x \in M : \CMcal{I} \neq 0\}$ at $x$. In particular, if $L_{\CMcal{R}}$ is constant then $\mathcal{M}$ is called a pseudo-symmetric manifold of constant type \cite{NHMS}.

\begin{thm}\label{7}
If a non-flat (2n + 1)-dimensional $N(k)$-paracontact metric manifold $\mathcal{M}$ is a proper pseudo-symmetric manifold then the manifold is a pseudo-symmetric manifold of constant type.
\end{thm}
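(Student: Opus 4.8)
The plan is to feed the pseudo-symmetry hypothesis (\ref{I1}) (with $\mathcal{I}=\mathcal{R}$) into the $N(k)$-relation (\ref{2.6}), exploiting the fact that the latter identifies $\mathcal{R}(\xi,Y)$ with a scalar multiple of the endomorphism $\xi\wedge_{g}Y$. Indeed, since $(\xi\wedge_{g}Y)Z=g(Y,Z)\xi-g(\xi,Z)Y=g(Y,Z)\xi-\eta(Z)Y$, equation (\ref{2.6}) says precisely that $\mathcal{R}(\xi,Y)=k\,(\xi\wedge_{g}Y)$ as an operator on each tangent space. Recall also that the derivation action occurring in (\ref{I1}) is
\begin{equation*}
(A\cdot\mathcal{R})(U,V)W=A\,\mathcal{R}(U,V)W-\mathcal{R}(AU,V)W-\mathcal{R}(U,AV)W-\mathcal{R}(U,V)AW
\end{equation*}
for an endomorphism $A$, and that this expression is linear in $A$.

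Consequently $\mathcal{R}(\xi,Y)=k\,(\xi\wedge_{g}Y)$ gives at once $\mathcal{R}(\xi,Y)\cdot\mathcal{R}=k\big((\xi\wedge_{g}Y)\cdot\mathcal{R}\big)$, while putting $X=\xi$ in (\ref{I1}) gives $\mathcal{R}(\xi,Y)\cdot\mathcal{R}=L_{\mathcal{R}}\big((\xi\wedge_{g}Y)\cdot\mathcal{R}\big)$. Comparing, we obtain
\begin{equation*}
(k-L_{\mathcal{R}})\,\big((\xi\wedge_{g}Y)\cdot\mathcal{R}\big)=0\qquad\text{for every }Y .
\end{equation*}

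It remains to verify that $(\xi\wedge_{g}Y)\cdot\mathcal{R}$ does not vanish identically. For this I would expand $\big((\xi\wedge_{g}Y)\cdot\mathcal{R}\big)(U,V)W$ from the displayed derivation formula, set $W=\xi$, and substitute (\ref{2.3b}) together with the consequences of (\ref{2.6}) (namely $\mathcal{R}(\xi,V)\xi=k\{\eta(V)\xi-V\}$ and $\mathcal{R}(U,\xi)\xi=k\{U-\eta(U)\xi\}$) for each curvature term involving $\xi$; a short cancellation yields
\begin{equation*}
\big((\xi\wedge_{g}Y)\cdot\mathcal{R}\big)(U,V)\xi=\mathcal{R}(U,V)Y-k\,(U\wedge_{g}V)Y .
\end{equation*}
Were the left-hand side zero for all $U,V,Y$, then $\mathcal{R}(U,V)Y=k\,(U\wedge_{g}V)Y$, so $\mathcal{M}$ would be of constant curvature $k$; such a manifold satisfies $\mathcal{R}\cdot\mathcal{R}=0$ and $(X\wedge_{g}Y)\cdot\mathcal{R}=0$, hence it is semisymmetric and not a \emph{proper} pseudo-symmetric manifold (and for $k=0$ it would be flat), contradicting the hypotheses. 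Thus $(\xi\wedge_{g}Y)\cdot\mathcal{R}\neq 0$, which forces $L_{\mathcal{R}}=k$ on $U_{\mathcal{R}}$; since $k$ is a constant, $\mathcal{M}$ is a pseudo-symmetric manifold of constant type.

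The two expansions above are routine tensor bookkeeping with (\ref{2.3b}) and (\ref{2.6}) and present no real difficulty. The point that needs genuine care --- and the main obstacle --- is the closing dichotomy: one must be precise about the meaning of ``proper pseudo-symmetric'' (that $L_{\mathcal{R}}$ is genuinely determined and nonzero, equivalently that the Tachibana-type tensor $(X\wedge_{g}Y)\cdot\mathcal{R}$ is not identically zero on $U_{\mathcal{R}}$), so that the constant-curvature alternative is legitimately ruled out rather than merely giving a vacuous instance of (\ref{I1}); and one should double-check the exact sign in the identity for $\big((\xi\wedge_{g}Y)\cdot\mathcal{R}\big)(U,V)\xi$, on which the whole argument hinges.
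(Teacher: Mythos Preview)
Your argument is correct and follows the same route as the paper: use (\ref{2.6}) to identify $\mathcal{R}(\xi,Y)$ with $k(\xi\wedge_{g}Y)$, compare with the pseudo-symmetry relation at $X=\xi$, and read off $L_{\mathcal{R}}=k$. You actually go further than the paper by explicitly computing $\big((\xi\wedge_{g}Y)\cdot\mathcal{R}\big)(U,V)\xi=\mathcal{R}(U,V)Y-k(U\wedge_{g}V)Y$ to rule out the degenerate constant-curvature case via the ``proper'' hypothesis, a point the paper leaves implicit.
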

\begin{proof}
If $\mathcal{M}$ is a Desczc type pseudo-symmetric then from (\ref{2.6}) and (\ref{I2}), one can easily obtain that $\mathcal{R}(\xi, X)\cdot \mathcal{R} =k\{(\xi \wedge X)\cdot \mathcal{R}\}$, which specifies that the pseudo-symmetry function $L_{\mathcal{R}} =k$ (a constant). Hence, the manifold $\mathcal{M}$ is a pseudo-symmetric manifold of constant type. This completes the proof.
\end{proof}

\section{\bf Ricci generalized pseudo-symmetric $N(k)$-paracontact metric manifolds \label{sec5}}
If Ricci curvature tensor $\mathcal{S}$ holds
\begin{equation}
\mathcal{R}(X, Y)\cdot \mathcal{R}=L[(X\wedge_{\mathcal{S}} Y)\cdot \mathcal{R}], \label{II1}
\end{equation}
at every point of the Riemannian manifold then the manifold $M$ is called Ricci
generalized pseudo-symmetric manifold. Where $(X\wedge_{\mathcal{S}} Y)Z$ is given by
\begin{equation}
\label{II2}   (X\wedge_{\mathcal{S}} Y)Z= \mathcal{S}(Y, Z)X - \mathcal{S}(X, Z)Y,
\end{equation}
 and $L$ is some function.
\begin{thm}\label{7}
 A non-flat (2n + 1)-dimensional $N(k)$-paracontact metric manifold $\mathcal{M}$ holds $\mathcal{R}(\xi, X)\cdot R = L\{(\xi\wedge_{\mathcal{S}} X)\cdot R\}$ then $\mathcal{M}$ is either semi-symmetric or $k=0$ or Einstein manifold.
\end{thm}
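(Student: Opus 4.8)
The plan is to use the very rigid structure of an $N(k)$-paracontact metric manifold in the $\xi$-direction to collapse the curvature condition to a pointwise algebraic identity between $R$ and $\mathcal{S}$. First I would record two reductions. By (\ref{2.6}) the curvature operator $R(\xi,X)$ equals $k\,(\xi\wedge_{g}X)$, where $\wedge_{g}$ is as in (\ref{I2}), so that $R(\xi,X)\cdot R = k\,[(\xi\wedge_{g}X)\cdot R]$. By (\ref{2.10}) we have $\mathcal{S}(\xi,Z)=2nk\,\eta(Z)$, hence from (\ref{II2})
\[
(\xi\wedge_{\mathcal{S}}X)Z = \mathcal{S}(X,Z)\xi - 2nk\,\eta(Z)X = 2nk\,(\xi\wedge_{g}X)Z + \bigl(\mathcal{S}(X,Z)-2nk\,g(X,Z)\bigr)\xi ,
\]
so the hypothesis reads $k[(\xi\wedge_{g}X)\cdot R] = L\bigl(2nk[(\xi\wedge_{g}X)\cdot R] + A_{X}\cdot R\bigr)$, where $A_{X}$ is the endomorphism $Z\mapsto \mathcal{S}'(X,Z)\xi$ with $\mathcal{S}'(X,Z):=\mathcal{S}(X,Z)-2nk\,g(X,Z)$ (note $\mathcal{S}'(\cdot,\xi)=0$).

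Next I would evaluate both derivations on the triple $(U,V)\xi$. Expanding $[(\xi\wedge_{g}X)\cdot R](U,V)\xi$ as a derivation — one term acting on the output, one on each of the three inputs — and substituting $R(U,V)\xi = k\{\eta(V)U-\eta(U)V\}$ from (\ref{2.3b}) and $R(\xi,U)\xi = k\{\eta(U)\xi-U\}$ from (\ref{2.6}), all the $\xi$-direction contributions cancel and the expression collapses to $R(U,V)X - k\,(U\wedge_{g}V)X$. A parallel but shorter computation, using (\ref{2.3b}) and (\ref{2.6}) once more, gives $[A_{X}\cdot R](U,V)\xi = k\bigl(\mathcal{S}'(X,U)V-\mathcal{S}'(X,V)U\bigr)$. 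Substituting these back and simplifying, the hypothesis becomes the pointwise identity
\[
k(1-2nL)\,R(U,V)X = k^{2}\,(U\wedge_{g}V)X + Lk\,\bigl(\mathcal{S}(X,U)V-\mathcal{S}(X,V)U\bigr),\qquad\forall\,X,U,V.
\]

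The final step is to put $U=\xi$ in this identity and compare components along $\xi$. Using $R(\xi,V)X = k\{g(V,X)\xi-\eta(X)V\}$, $(\xi\wedge_{g}V)X = g(V,X)\xi-\eta(X)V$ and $\mathcal{S}(X,\xi)=2nk\,\eta(X)$, every term (including all the $\eta(X)\eta(V)$ contributions) cancels except
\[
Lk\,\bigl(\mathcal{S}(X,V)-2nk\,g(X,V)\bigr)=0,\qquad\text{i.e.}\qquad Lk\,\mathcal{S}'(X,V)=0,
\]
for all $X,V$. Now split into the three stated cases: if $k=0$ there is nothing to prove; if $k\neq 0$ and $L\neq 0$, then $\mathcal{S}'\equiv 0$, i.e. $\mathcal{S}=2nk\,g$, so $\mathcal{M}$ is Einstein; and if $L=0$ (with $k\neq 0$) the hypothesis reduces to $R(\xi,X)\cdot R=0$, so $\mathcal{M}$ is semi-symmetric.

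The main obstacle is the first of the two expansions, namely verifying that $[(\xi\wedge_{g}X)\cdot R](U,V)\xi = R(U,V)X-k(U\wedge_{g}V)X$: it unwinds into four derivation terms, each of which must be rewritten through (\ref{2.3b}) and (\ref{2.6}) and produces a dozen or so $g$- and $\eta$-contractions, and one has to check that every term pointing along $\xi$ (and along $X$) cancels — a sign or index slip here distorts the final identity and hence the case analysis. Everything afterwards — the $A_{X}$-expansion, the substitution $U=\xi$, and the trichotomy — is immediate, and the non-flatness hypothesis is used only to make the three alternatives genuinely distinct and none of them vacuous.
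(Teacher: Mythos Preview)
Your proof is correct and arrives at exactly the paper's key equation (\ref{8.2}), namely $Lk\{2nk\,g(X,Z)-\mathcal{S}(X,Z)\}=0$, from which the trichotomy follows immediately. Your route is marginally longer than the paper's --- you evaluate the derivation on $(U,V)\xi$, pass through an intermediate identity for $R(U,V)X$, and then specialize $U=\xi$, whereas the paper simply projects $(R(\xi,X)\cdot R)(Y,Z)W$ onto $\xi$ and lands on (\ref{8.2}) directly --- but the underlying idea (exploit (\ref{2.6}) and (\ref{2.10}) in the $\xi$-direction) and the conclusion are the same.
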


\begin{proof}
Assume that $\mathcal{M}$ satisfies $g((\mathcal{R}(\xi, X)\cdot \mathcal{R}(Y, Z)W), \xi) = L(g(((\xi\wedge_{\mathcal{S}} X)\cdot R)(Y, Z)W), \xi)$. Then we have
\begin{eqnarray}
\nonumber g(\mathcal{R}(\xi, X)\mathcal{R}(Y,Z)W, \xi) - g(\mathcal{R}(\mathcal{R}(\xi, X)Y, Z)W, \xi) - g(\mathcal{R}(Y, \mathcal{R}(\xi, X)Z)W, \xi)\\
\nonumber - g(\mathcal{R}(Y, Z)\mathcal{R}(\xi, X)W, \xi)= g((\xi\wedge_{\mathcal{S}} X)\mathcal{R}(Y, Z)W, \xi) - g(\mathcal{R}((\xi\wedge_{\mathcal{S}}X)Y, Z)W, \xi) \\
 - g(\mathcal{R}(Y, (\xi\wedge_{\mathcal{S}}X)Z)W, \xi) - g(\mathcal{R}(Y, Z)(\xi\wedge_{\mathcal{S}}X)W, \xi). \label{8.1}
\end{eqnarray}

Using (\ref{2.6}), (\ref{2.10}) and (\ref{II2}), we get
\begin{equation}
Lk\{2nkg(X, Z)-S(X, Z)\}=0. \label{8.2}
\end{equation}
Hence the proof

\end{proof}

\section{\bf Generalized Ricci recurrent $N(k)$-paracontact metric manifolds \label{sec6}}
\begin{defi}
A $N(k)$-paracontact metric manifold $\mathcal{M}$ is said to be generalized Ricci recurrent if its non vanishing Ricci tensor $\mathcal{S}$ satisfies the condition
\begin{equation}
(\nabla_{X}\mathcal{S})(Y, Z) = \mathcal{A}(X)\mathcal{S}(Y, Z) + \mathcal{B}(X) g(Y, Z), \label{5.1}
\end{equation}
where $\mathcal{A}$ and $\mathcal{B}$ are two non-zero 1-forms such that $\mathcal{A}(X)=g(X, \zeta_{1})$ and $\mathcal{B}(X)=g(X, \zeta_{2})$, $\zeta_{1}$ and $\zeta_{2}$ being the associated vector fields of the 1-forms.
\end{defi}

From the preliminaries of $N(k)$-paracontact metric manifold, one can easily get
\begin{equation}
(\nabla_{X}\mathcal{S})(Y, \xi) = 2nk\{g(X, \phi Y) - g(hX, \phi Y) + \mathcal{S}(Y, \phi X) - \mathcal{S}(Y, \phi hX). \label{5.1a}
\end{equation}
Taking $Z=\xi$ in (\ref{5.1}) and using (\ref{5.1a}), we have
\begin{equation}
2nk\{g(X, \phi Y) - g(hX, \phi Y) + \mathcal{S}(Y, \phi X) - \mathcal{S}(Y, \phi hX) = \{2nk \mathcal{A}(X) + \mathcal{B}(X)\} \eta(Y). \label{5.2}
\end{equation}

On substituting $Y$ by $\xi$ in the above equation, one can obtain
\begin{equation}
 2nk \mathcal{A}(X) + \mathcal{B}(X)=0. \label{5.3}
\end{equation}

Thus we have:
\begin{thm}\label{T5}
 In a generalized Ricci recurrent $N(k)$-paracontact metric manifold $\mathcal{M}$ the associated 1-forms are linearly dependent and the vector fields of the associated 1-forms are of opposite direction.
\end{thm}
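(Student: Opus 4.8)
The approach is to obtain a pointwise linear relation between the $1$-forms $\mathcal{A}$ and $\mathcal{B}$ by feeding $\xi$ into the defining identity (\ref{5.1}) and exploiting the rigidity of an $N(k)$-paracontact metric manifold in the $\xi$-direction. First I would establish the auxiliary identity (\ref{5.1a}). Starting from $\mathcal{S}(Y,\xi)=2nk\,\eta(Y)$, which is (\ref{2.10}), covariant differentiation gives $(\nabla_X\mathcal{S})(Y,\xi)=2nk\,(\nabla_X\eta)(Y)-\mathcal{S}(Y,\nabla_X\xi)$; inserting $(\nabla_X\eta)(Y)=g(X,\phi Y)-g(hX,\phi Y)$ from (\ref{2.5}) and $\nabla_X\xi=-\phi X+\phi hX$ from (\ref{2.3}) then yields
\begin{equation*}
(\nabla_X\mathcal{S})(Y,\xi)=2nk\{g(X,\phi Y)-g(hX,\phi Y)\}+\mathcal{S}(Y,\phi X)-\mathcal{S}(Y,\phi hX).
\end{equation*}
This step is purely formal and imposes no restriction on $\mathcal{M}$.

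Next I would set $Z=\xi$ in (\ref{5.1}). Since $\mathcal{S}(Y,\xi)=2nk\,\eta(Y)$ and $g(Y,\xi)=\eta(Y)$, the right-hand side reduces to $\{2nk\,\mathcal{A}(X)+\mathcal{B}(X)\}\eta(Y)$, and comparing with the identity above produces (\ref{5.2}). The decisive step is then to put $Y=\xi$ in (\ref{5.2}): using $\phi\xi=0$ and $\eta\circ\phi=0$ from (\ref{2.1}), the terms $g(X,\phi\xi)$ and $g(hX,\phi\xi)$ vanish, while $\mathcal{S}(\xi,\phi X)=2nk\,\eta(\phi X)=0$ and likewise $\mathcal{S}(\xi,\phi hX)=0$; hence the entire left-hand side is zero, whereas $\eta(\xi)=1$ on the right. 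This leaves $2nk\,\mathcal{A}(X)+\mathcal{B}(X)=0$ for every vector field $X$, which is exactly (\ref{5.3}).

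Finally I would read off the conclusion. By (\ref{5.3}), $\mathcal{B}=-2nk\,\mathcal{A}$, so $\mathcal{A}$ and $\mathcal{B}$ are scalar multiples of one another and therefore linearly dependent; moreover $k\neq0$ here, since $k=0$ would force $\mathcal{B}\equiv0$, contradicting the standing hypothesis that $\mathcal{B}$ is a non-zero $1$-form. Writing $\mathcal{A}(X)=g(X,\zeta_1)$ and $\mathcal{B}(X)=g(X,\zeta_2)$, the relation becomes $g(X,\zeta_2+2nk\,\zeta_1)=0$ for all $X$, i.e. $\zeta_2=-2nk\,\zeta_1$; since $2nk>0$ the associated vector fields $\zeta_1$ and $\zeta_2$ point in opposite directions, which completes the proof. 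I do not expect any genuine obstacle: the only point needing care is checking that all four terms on the left of (\ref{5.2}) disappear when $Y=\xi$, and that the case $k=0$ is correctly ruled out; the rest is direct substitution.
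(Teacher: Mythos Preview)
Your argument is exactly the paper's: derive the auxiliary identity (\ref{5.1a}) from (\ref{2.10}), (\ref{2.5}) and (\ref{2.3}), put $Z=\xi$ in (\ref{5.1}) to obtain (\ref{5.2}), then put $Y=\xi$ and use $\phi\xi=0$, $\eta\circ\phi=0$ to kill the left-hand side and arrive at (\ref{5.3}); the paper does the same, only more tersely. One small caution: your claim that $2nk>0$ is not justified by the hypotheses---no sign condition on $k$ is assumed, and the paper's own example in Section~\ref{sec7} has $k=-1$---so all that (\ref{5.3}) really gives is $\zeta_2=-2nk\,\zeta_1$, and the ``opposite direction'' phrasing should be read with that caveat (the paper does not address this either).
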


\section{\bf Examples \label{sec7}}
In this section we show that the existence of generalized Ricci recurrent 3-dimensional $N(k)$-paracontact metric manifold, which verifies the result of section \ref{sec6}. We consider a 3-dimensional manifold $\mathcal{M} = \{(x, y, z)\in R^3, (x, y, z)\neq 0\}$, where $(x, y, z)$ are the standard coordinate in $\mathcal{R}^3$. Let $e_{1}$, $e_{2}$, $e_{3}$ be three linearly independent vector fields in $\mathcal{M}$ which satisfies
\begin{equation}
\nonumber e_{1}= \frac{\partial}{\partial{x}}+z\frac{\partial}{\partial{y}}-2y\frac{\partial}{\partial{z}}, \,\,\,\,\,\,\,\,\,\,\, e_{2}= \frac{\partial}{\partial{y}}, \,\,\,\,\,\,\,\,\,\, e_{3}= \frac{\partial}{\partial{z}}.
\end{equation}

We define the pseudo-Riemannian metric $g$ as follows $g(e_{1}, e_{2}) = g(e_{3}, e_{3}) = 1$ and $g(e_{i}, e_{j}) = 0,$ otherwise.
We obtain
\begin{equation}
\nonumber [e_{1}, e_{2}]=2e_{3}, \,\,\,\,\,\, [e_{1}, e_{3}]=-e_{2} , \,\,\,\,\, [e_{2}, e_{3}]=0.
\end{equation}

We consider $\eta= 2ydx + dz$ and satisfying $\eta(e_{1}) = 0 = \eta(e_{2}), \eta(e_{3}) = 1.$ Let $\phi$ be the $(1, 1)$-tensor field defined by $\phi (e_{1})= e_{1}, \,\,\,\,\,\, \phi (e_{2})= -e_{2}, \,\,\,\,\, \phi (e_{3})= 0$. Then we
have $d\eta(e_{1}, e_{2}) = g(e_{1}, \phi e_{2}), d\eta(e_{1}, e_{3}) = g(e_{1}, \phi e_{3})$ and $d\eta(e_{2}, e_{3}) = g(e_{2}, \phi e_{3})$

Thus for $e_{3} = \xi$, the structure $(\phi, \xi, \eta, g)$ is a paracontact metric structure on
$\mathcal{M}$ with
\[ he_{i}=
\begin{cases}
      e_{2} & for \,\,\,\,\, i=1 \\
      0 & for \,\,\,\,\,\,\, i=2, 3\\
   \end{cases}
\]

Using Koszul's formula, we can easily calculate
\begin{equation}
\nonumber \left(
  \begin{array}{ccc}
   \nabla_{E_{1}}E_{1} &\nabla_{E_{1}}E_{2} &\nabla_{E_{1}}E_{3} \\
   \nabla_{E_{2}}E_{1} &\nabla_{E_{2}}E_{2} &\nabla_{E_{2}}E_{3}  \\
   \nabla_{E_{3}}E_{1} &\nabla_{E_{3}}E_{2} &\nabla_{E_{3}}E_{3} \\
  \end{array}
\right) =\left(
           \begin{array}{ccc}
             e_{3} & e_{3} & -e_{1}-e_{2} \\
             -e_{3} & 0 & e_{2}  \\
              -e_{1}& e_{2} & 0 \\
           \end{array}
         \right).
\end{equation}

So, above relations tells us that the manifold satisfies the equation (\ref{2.3}) for any vector field $X$ in $\chi(\mathcal{M})$ and $\xi=e_{3}$. Hence the manifold is a paracontact metric manifold.\\
 Using the above relations it can be verified that\\
 \begin{tabular}{ccc}
 $\mathcal{R}(e_{1}, e_{2})e_{3} = 0$, & \,\,\,\,\,\,\,\,\,\,$\mathcal{R}(e_{2}, e_{3})e_{3} =-e_{2}$, & \,\,\,\,\,\,\,\,\,\,\,\,$\mathcal{R}(e_{1}, e_{3})e_{3} = 2e_{2}-e_{1}$,\\
 \,\,\,\,\,\,\,\,\,\,$\mathcal{R}(e_{1}, e_{2})e_{2} = -3e_{2}$, & \,\,\,\,$\mathcal{R}(e_{2}, e_{3})e_{2} = 0$, & $\mathcal{R}(e_{1}, e_{3})e_{2} =e_{3}$,\\
 \,\,\,\,$\mathcal{R}(e_{1}, e_{2})e_{1} = 3e_{1}$, & \,\,\,\,\,\,\,\,$\mathcal{R}(e_{2}, e_{3})e_{1} =e_{3}$, & \,\,\,\,\,\,\,\,$\mathcal{R}(e_{1}, e_{3})e_{1} =-2e_{3}$. \\
 \end{tabular}

In view of the expressions of the curvature tensors we conclude that the manifold is a $N(k)$-paracontact metric manifold with $k=-1$.

Using this, we find the values of the Ricci tensor as follows
\begin{equation}
\nonumber \mathcal{S}(e_{1}, e_{1}) = -1, \,\,\,\,\,\,\,\,\,\, \mathcal{S}(e_{2}, e_{2}) =-3  , \,\,\,\,\,\,\,\,\,\, \mathcal{S}(e_{3}, e_{3}) = 2.
\end{equation}

Since $\{e_{1}, e_{2}, e_{3}\}$ forms a basis of $\mathcal{M}^3$, any vector fields $X, Y \in \chi(\mathcal{M})$ can be
written as $X = a_{1}e_{1} + b_{1}e_{2} + c_{1}e_{3}$ and $Y = a_{2}e_{1} + b_{2}e_{2} + c_{2}e_{3}$, where $a_{i}, b_{i}, c_{i} \in \mathcal{R}^{+}$ (the set of all positive real numbers), $i = 1, 2$. This implies that
\begin{equation}
\nonumber \mathcal{S}(X, Y) = -a_{1}b_{1}-3a_{2}b_{2}+2a_{3}b_{3}\,\,\,\,\,\,\,\,\,\,\,\, and\,\,\,\,\,\,\,\,\,\,\, g(X, Y)=a_{1}b_{2} + a_{3}b_{3}.
\end{equation}

By virtue of above, we have the following:
\begin{eqnarray}
\nonumber & & (\nabla_{e_{1}}\mathcal{S})(X, Y)=-\{3(a_{3}b_{1} + a_{1}b_{3}) + 5(a_{3}b_{2}+a_{2}b_{3})\},\\
\nonumber & & (\nabla_{e_{2}}\mathcal{S})(X, Y)=2(a_{1}b_{3} + a_{3}b_{1}) + 3(a_{3}b_{2}+a_{2}b_{3}),\\
\nonumber & & (\nabla_{e_{3}}\mathcal{S})(X, Y)=-(2a_{1}b_{1}-6a_{2}b_{2}).
\end{eqnarray}

This means that manifold under the consideration is not Ricci symmetric. Let us now consider the 1-forms
\begin{eqnarray}
\nonumber & & \mathcal{A}(e_{1})=\frac{\{3(a_{3}b_{1} + a_{1}b_{3}) + 5(a_{3}b_{2}+a_{2}b_{3})\}}{a_{1}b_{1}+3a_{2}b_{2}+2a_{1}b_{2}},\,\,\,\,\,\,\,\mathcal{B}(e_{1})=-2\frac{\{3(a_{3}b_{1} + a_{1}b_{3}) + 5(a_{3}b_{2}+a_{2}b_{3})\}}{a_{1}b_{1}+3a_{2}b_{2}+2a_{1}b_{2}},\\
\nonumber & & \mathcal{A}(e_{2})= \frac{\{2(a_{1}b_{3} + a_{3}b_{1}) + 3(a_{3}b_{2}+a_{2}b_{3})\}}{a_{1}b_{1}+3a_{2}b_{2}+2a_{1}b_{2}},\,\,\,\,\,\,\, \mathcal{B}(e_{2})= -2\frac{\{2(a_{1}b_{3} + a_{3}b_{1}) + 3(a_{3}b_{2}+a_{2}b_{3})\}}{a_{1}b_{1}+3a_{2}b_{2}+2a_{1}b_{2}}, \\
\nonumber & & \mathcal{A}(e_{3})= \frac{\{(2a_{1}b_{1}-6a_{2}b_{2}))\}}{a_{1}b_{1}+3a_{2}b_{2}+2a_{1}b_{2}},\,\,\,\,\,\,\,\,\,\,\,\,\,\,\,\,\,\,\,\,\,\,\,\,\,\,\,\,\,\,\,\,\,\,\,\,\,\,\,\, \mathcal{B}(e_{3})= -2\frac{\{(2a_{1}b_{1}-6a_{2}b_{2}))\}}{a_{1}b_{1}+3a_{2}b_{2}+2a_{1}b_{2}},
\end{eqnarray}
at any point $X\in M$. From (\ref{5.1}) we have
\begin{equation}
(\nabla_{e_{i}}\mathcal{S})(Y, Z) = \mathcal{A}(e_{i})\mathcal{S}(Y, Z) + 3\mathcal{B}(e_{i}) g(Y, Z), \,\,\,\,\,\,\,\,\,\,\,\, i=1, 2, 3.\label{6.1}
\end{equation}

It can be easily shown that the manifold with the above 1-forms satisfies the relation (\ref{6.1}). Hence the manifold under consideration is a generalized Ricci recurrent $N(k)$-paracontact metric manifold. Also with the help of these 1-forms we can easily verify the theorem \ref{T5} for three dimensional case.

\end{document}